\numberwithin{equation}{section}
\numberwithin{figure}{section}
\theoremstyle{plain}
\newtheorem{thm}{\protect\theoremname}
\theoremstyle{plain}
\newtheorem{lem}[thm]{\protect\lemmaname}
\global\long\def\Im{\operatorname{Im}}
\global\long\def\Arg{\operatorname{Arg}}
\providecommand{\lemmaname}{Lemma}
\providecommand{\theoremname}{Theorem}
\begin{document}
\title[Arithmetic sequence of exponents]{Entire functions with an arithmetic sequence of exponents}
\author{Dallas Ruth \and Khang Tran}
\email{khangt@mail.fresnostate.edu \and dallasruth@mail.fresnostate.edu}
\address{Department of Mathematics, California State University, Fresno.\\
5245 North Backer Avenue M/S PB108 Fresno, CA 93740}
\subjclass[2000]{30C15; 26C10; 11C08}
\begin{abstract}
For a given entire function $f(z)=\sum_{j=0}^{\infty}a_{j}z^{j}$,
we study the zero distribution of $f_{r}(z)=\sum_{j\equiv r\pmod m}a_{j}z^{j}$
where $m\in\mathbb{N}$ and $0\le r<m$. We find conditions under
which the zeros of $f_{r}(z)$ lie on $m$ radial rays defined by
$\Im z^{m}=0$ and $\Re z^{m}\le0$. 
\end{abstract}

\maketitle

\section{Introduction}

The study of zero distribution of polynomials has a long history which
spans at least from a theorem of René Descartes on the rule of signs
to the modern theory of linear operators preserving zeros of polynomials
on a circular domain developed by J. Borcea and P. Br\"and\'en (\cite{bb}).
On one end, the Descartes' rule of signs, whose proof can be found
in \cite{wang}, asserts that the number of positive zeros of a real
polynomial is equal to or less than the number of sign changes of
its coefficients by an even number. This rule will play an important
role in our paper later in locating the final zero of our polynomials.
On the other end, a special linear operator has a connection with
the famous Hermite--Biehler theorem (Theorem 6.3.4 of \cite{rs})
via the linear transformation which transforms a polynomial 
\[
P(z)=\sum_{j=0}^{n}a_{j}z^{j}
\]
to the polynomial 
\[
P_{r}(z)=\sum_{\substack{j\equiv r\pmod m\\
0\le j\le n
}
}a_{j}z^{j}
\]
for $m\in\mathbb{N}$ and $0\le r<m$. If we define the sequence 
\[
\gamma_{j}=\begin{cases}
1 & \text{ if }j\equiv r\pmod m\\
0 & \text{ else}
\end{cases},
\]
then 
\[
P_{r}(z)=\sum_{j=0}^{n}\gamma_{j}a_{j}z^{j}.
\]
By P\'olya and Schur's characterization \cite{ps}, the sequence
$\left\{ \gamma_{j}\right\} _{j=0}^{\infty}$ is not a multiplier
sequence, a sequence in which if the zeros of $\sum_{j=0}^{n}a_{j}z^{j}$
are real, then so are those of $\sum_{j=0}^{n}\gamma_{j}a_{j}z^{j}$.
However, Hermite--Biehler theorem asserts that if $m=2$ and the
zeros of $P(z)$ lies on the open left-half plane, then the zeros
of $P_{r}(z)$, $0\le r\le1$, are purely imaginary. It is natural
to ask for similar results for other values of $m$. Theorems \ref{thm:zerossectorpoly}
and \ref{thm:zerohalfplanepoly} of this paper give an answer to this
question for the case $m=3$ and $4$. Finally Theorems \ref{thm:zerossectorentire}
and \ref{thm:zeroshalfplaneentire} extend these results to special
classes of entire functions of orders $1$ and $2$ respectively.
These classes of entire functions are different from the Laguerre-P\'olya
class (\cite[Definitions 2 and 3]{cf}) in the sense that the zeros
of these functions lie either on a sector of the plane or a half plane
(instead of on the real line). 

In the whole paper, for a complex number $z$, we use the principle
angle which is restricted by $-\pi<\Arg z\le\pi$. With this convention,
we provide formal statements of our theorems in this paper.
\begin{thm}
\label{thm:zerossectorpoly}Suppose $P(z)=\sum_{i=0}^{n}a_{i}z^{i}\in\mathbb{R}[z]$
is a real polynomial whose zeros lie on the sector of the left half
plane defined by the complex argument inequalities $2\pi/3<\left|\Arg z\right|\le\pi$.
Then for any $0\le r<3$, the zeros of the polynomial 
\[
P_{r}(z)=\sum_{\substack{j\equiv r\pmod3\\
0\le j\le n
}
}a_{j}z^{j}
\]
lie on the $3$ radial rays: $\Im z^{3}=0$ and $\Re z^{3}\le0$. 
\end{thm}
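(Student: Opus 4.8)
The first move is to expose the cube-power structure explicitly. Writing $j = r + 3l$ in the defining sum gives the factorization
\[
P_r(z) = \sum_{l \ge 0} a_{r+3l}\, z^{r+3l} = z^{r} g_r(z^{3}), \qquad g_r(w) := \sum_{l \ge 0} a_{r+3l}\, w^{l}.
\]
Since the condition $\Im z^{3}=0$, $\Re z^{3}\le 0$ describes exactly the three rays $\Arg z \in \{\pi/3, \pi, -\pi/3\}$ together with the origin, and the cube roots of a non-positive real number are precisely these rays, the theorem is equivalent to the single assertion that every zero of $g_r$ is a non-positive real number (the prefactor $z^r$ contributes only a zero at the origin, which lies on all three rays). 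Thus the whole statement reduces to showing that $g_r(w) \ne 0$ whenever $w \notin (-\infty, 0]$.

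Second, I would record the sign information forced by the sector hypothesis. Factoring $P$ over $\mathbb{R}$, each real zero lies on $(-\infty, 0]$ (the only part of the sector meeting the real axis), contributing a factor $z + c$ with $c \ge 0$, while each conjugate pair $\alpha, \bar\alpha$ with $\Arg\alpha \in (2\pi/3, \pi)$ contributes $z^2 - 2(\Re\alpha) z + |\alpha|^2$ with $-2\Re\alpha > 0$ since $\Re\alpha < 0$ on the sector. Every factor therefore has non-negative coefficients, so, after normalizing $a_n > 0$, all $a_j \ge 0$; in particular the coefficients of $g_r$ are non-negative. By Descartes' rule of signs $g_r$ then has no positive real zero. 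This is the step that pins down the final boundary zero and disposes of the case $w > 0$.

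Third, and this is the heart of the matter, I would rule out the non-real zeros of $g_r$. Using the multisection identity
\[
3 P_r(z) = P(z) + \omega^{-r} P(\omega z) + \omega^{-2r} P(\omega^2 z), \qquad \omega = e^{2\pi i/3},
\]
a zero $w_0 = z_0^3$ of $g_r$ with $w_0 \notin \mathbb{R}$ corresponds to a $z_0$ lying strictly off all six rays $\Arg z \in \{0, \pm 2\pi/3, \pi/3, \pi, -\pi/3\}$. The plan is to show that the three summands above then lie in a common open half-plane through the origin, so that their sum cannot vanish. Concretely, with $P(z) = a_n\prod_j(z - \alpha_j)$ one has $\Arg P(\omega^k z_0) = \sum_j \Arg(\omega^k z_0 - \alpha_j)$, and I would bound each angle $\Arg(\omega^k z_0 - \alpha_j)$ using only $\alpha_j \in S$ and the location of $z_0$, so that the three total arguments $-\tfrac{2\pi k r}{3} + \Arg P(\omega^k z_0)$, $k = 0,1,2$, span an arc shorter than $\pi$. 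Combined with the conjugate symmetry $P_r(\bar z) = \overline{P_r(z)}$ coming from the real coefficients, it then suffices to treat a single sector.

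The main obstacle is precisely this argument estimate: the angular contributions $\Arg(\omega^k z_0 - \alpha_j)$ depend on both the individual zeros $\alpha_j$ spread through the narrow sector $S$ and on the twist factors $\omega^{-kr}$, and one must verify the half-plane containment uniformly in the positions of the $\alpha_j$ and simultaneously for all three residues $r \in \{0,1,2\}$. I expect the cleanest route is to prove a one-factor inequality controlling $\Arg(\omega^k z_0 - \alpha) - \Arg(z_0 - \alpha)$ for a single $\alpha \in S$ and $z_0$ off the rays, and then sum over $j$; forcing the endpoints of these angular ranges to align into an open (rather than merely closed) half-plane is the delicate point, and is presumably where the exact opening angle $2\pi/3$ of the sector enters in an essential way.
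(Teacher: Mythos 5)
Your opening reduction is sound: writing $P_r(z)=z^r g_r(z^3)$ and observing that the theorem is equivalent to ``every zero of $g_r$ lies in $(-\infty,0]$'' is correct, and your disposal of positive real zeros via the non-negativity of the coefficients (which the paper also establishes and uses, via Descartes' rule of signs) is fine. But the heart of the theorem --- that $g_r$ has no zeros off $(-\infty,0]$, i.e.\ that $P_r(z_0)\neq 0$ whenever $z_0$ lies off the three rays --- is precisely the part you have not proved. Your last two paragraphs describe a plan and then acknowledge that its key estimate is an ``obstacle''; as written, this is a proposal for a proof, not a proof.

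Moreover, the plan rests on an intermediate claim that is false: it is not true that for $z_0$ off the rays the three summands $\omega^{-kr}P(\omega^k z_0)$ lie in a common open (or even closed) half-plane. Take $P(z)=(z+1)^4$, whose only zero $-1$ satisfies $|\Arg(-1)|=\pi$ and so lies in the sector, take $r=0$ and $z_0=e^{i\pi/6}$. Then
\begin{align*}
P(z_0)&=\bigl(2\cos\tfrac{\pi}{12}\bigr)^4 e^{i\pi/3},\\
P(\omega z_0)&=\bigl(2\cos\tfrac{5\pi}{12}\bigr)^4 e^{-i\pi/3},\\
P(\omega^2 z_0)&=(1-i)^4=4e^{i\pi},
\end{align*}
and the three directions $e^{i\pi/3},e^{-i\pi/3},e^{i\pi}$ are contained in no half-plane whatsoever; yet $P(z_0)+P(\omega z_0)+P(\omega^2 z_0)=3P_0(z_0)=3(1+4i)\neq 0$, simply because the first modulus ($\approx 13.9$) overwhelms the other two ($\approx 0.07$ and $4$). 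So the non-vanishing is a modulus phenomenon, not an angular one, and no refinement of your one-factor angle inequality can salvage the half-plane containment. This is also where your route diverges from the paper's: the paper never attempts a pointwise exclusion of off-ray points. Instead it \emph{counts} zeros on the rays and invokes the fundamental theorem of algebra. Concretely, it rotates to $H(z)=e^{-r\pi i/3}P_r(e^{\pi i/3}z)$ so that the target rays contain the positive axis, writes $3H(z)=\sum_{k}\omega_k^{-r}P(\omega_k z)$ over the cube roots $\omega_k$ of $-1$, and normalizes along $z\in(0,\infty)$ to obtain a real-valued function $g(\theta)=2\cos(n\theta+r\pi/3)+E(\theta)$ with $|E(\theta)|<1$; the bound on $E$ is Lemma \ref{lem:modulusq-sector}, which is exactly where the sector hypothesis $2\pi/3<|\Arg z_j|\le\pi$ enters. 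Lemma \ref{lem:monotone} shows $\theta(z)$ is an increasing bijection of $(0,\infty)$ onto $(0,\pi/3)$, sign alternation of $g$ at the points where the cosine equals $\pm1$ yields $\lfloor n/3\rfloor-1$ positive zeros of $H$ by the intermediate value theorem, Descartes' rule supplies the final one, and degree counting then forces \emph{all} zeros onto the rays. Note that even the correct dominance statement ($|E|<1$) cannot give a pointwise exclusion, since the dominant term $2\cos(n\theta+r\pi/3)$ itself vanishes repeatedly; the indirect counting argument is what closes the gap that your approach leaves open.
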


\,
\begin{thm}
\label{thm:zerohalfplanepoly}Suppose $P(z)=\sum_{i=0}^{n}a_{i}z^{i}\in\mathbb{R}[z]$
is a real polynomial whose zeros lie on the open left half plane $\Re z<0$.
Then for any $0\le r<4$, the zeros of the polynomial 
\[
P_{r}(z)=\sum_{\substack{j\equiv r\pmod4\\
0\le j\le n
}
}a_{j}z^{j}
\]
lie on the $4$ radial rays: $\Im z^{4}=0$ and $\Re z^{4}\le0$. 
\end{thm}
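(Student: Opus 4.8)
The plan is to reduce the two–dimensional statement to a one–dimensional question about a single auxiliary polynomial, and then to apply the Hermite--Biehler theorem twice. Writing $P_{r}(z)=\sum_{k\ge0}a_{r+4k}z^{r+4k}=z^{r}Q_{r}(z^{4})$ with $Q_{r}(w)=\sum_{k\ge0}a_{r+4k}w^{k}$, I note that the nonzero zeros of $P_{r}$ are exactly the fourth roots of the zeros of $Q_{r}$, while $z=0$ (when $r>0$) sits at the common endpoint of the four rays. Since a point $z$ lies on the rays $\Im z^{4}=0$, $\Re z^{4}\le0$ precisely when $z^{4}$ is a non-positive real number, the theorem is equivalent to the assertion that every zero of $Q_{r}$ is real and non-positive. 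So it suffices to prove that for each $r\in\{0,1,2,3\}$ the polynomial $Q_{r}$ has only real, non-positive zeros.

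To reach $Q_{r}$ I would filter modulo $2$ twice. Decompose $P(z)=P^{[0]}(z^{2})+zP^{[1]}(z^{2})$, where $P^{[0]}(u)=\sum_{k}a_{2k}u^{k}$ collects the even-indexed coefficients and $P^{[1]}(u)=\sum_{k}a_{2k+1}u^{k}$ the odd-indexed ones. A direct index computation then shows that the two further even--odd decompositions of $P^{[0]}$ and $P^{[1]}$ recover exactly the four sections: $Q_{0}$ and $Q_{2}$ are the even and odd parts of $P^{[0]}$, and $Q_{1}$ and $Q_{3}$ are the even and odd parts of $P^{[1]}$. In other words, the four sections of $P$ modulo $4$ are produced by filtering twice modulo $2$.

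Now I would invoke the Hermite--Biehler theorem in the single direction I need, namely that the even and odd parts of a real Hurwitz-stable polynomial (one whose zeros satisfy $\Re z<0$) have only real, negative zeros. Applied to $P$, which is Hurwitz stable by hypothesis, this gives that $P^{[0]}$ and $P^{[1]}$ have only real negative zeros. But a real polynomial all of whose zeros are real and negative is itself Hurwitz stable, so I may apply the same statement a second time, once to $P^{[0]}$ and once to $P^{[1]}$; this shows that their even and odd parts --- that is, all four $Q_{r}$ --- have only real non-positive zeros. Combined with the reduction above, every zero of $P_{r}$ then lies on the four rays, which proves the theorem.

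The only delicate points I anticipate are the degenerate cases and the possibility of a spurious zero at the origin, and both are handled by the classical fact that a real polynomial with all zeros in the open left half plane has coefficients that are all nonzero and of one sign. In particular $a_{r}=Q_{r}(0)\ne0$, so $Q_{r}$ does not vanish at the origin and its zeros are strictly negative, while each intermediate polynomial has full-support positive coefficients, ensuring that the two applications of Hermite--Biehler are non-degenerate. Should one wish to avoid citing Hermite--Biehler as a black box, the statement I use follows from the argument principle applied to $P$ on the imaginary axis, where $P^{[0]}(-y^{2})=\Re P(iy)$ and $yP^{[1]}(-y^{2})=\Im P(iy)$, and stability forces the real interlacing of their zeros; I expect verifying this interlacing to be the genuine technical core of the argument.
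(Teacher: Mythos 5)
Your proof is correct, and it takes a genuinely different route from the paper's. The paper proves Theorems \ref{thm:zerossectorpoly} and \ref{thm:zerohalfplanepoly} by a single unified analytic argument: it rotates, writing $H(z)=e^{-r\pi i/4}P_{r}(e^{\pi i/4}z)=\frac{1}{4}\sum_{k=0}^{3}\omega_{k}^{-r}P(\omega_{k}z)$ over the fourth roots $\omega_{k}$ of $-1$, normalizes this sum into a real function $g(\theta)$ of the accumulated argument $\theta(z)=\sum_{j}\Arg(\omega_{1}z-z_{j})$, shows the $k=0,1$ terms contribute $2\cos\left(n\theta+r\pi/4\right)$ while the remaining terms have total modulus strictly less than $m-2=2$ (Lemma \ref{lem:modulusq-halfplane}), proves $\theta(z)$ maps $(0,\infty)$ bijectively onto $(0,\pi/4)$ (Lemma \ref{lem:monotone}), extracts $\left\lfloor n/4\right\rfloor -1$ positive zeros of $H$ by the intermediate value theorem, and captures the last zero with Descartes' rule of signs. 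You instead factor the mod-$4$ filtration as two mod-$2$ filtrations and invoke the Hermite--Biehler theorem twice --- precisely the statement the paper quotes in its introduction (the $m=2$ case) but never actually uses in its proofs. The step that legitimizes the iteration, namely that a real polynomial with only real negative zeros is in particular Hurwitz stable, so that Hermite--Biehler may be applied again to $P^{[0]}$ and $P^{[1]}$, is sound; your indexing ($Q_{0},Q_{2}$ as the even/odd parts of $P^{[0]}$, and $Q_{1},Q_{3}$ as those of $P^{[1]}$) is correct; and non-degeneracy is guaranteed, as you note, because a real Hurwitz polynomial has all coefficients nonzero and of one sign --- the same fact the paper needs for its Descartes'-rule step. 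As for what each approach buys: your argument is far shorter, replaces bespoke estimates by a classical black box, and iterates to give the analogous theorem for every modulus $m=2^{k}$ (zeros on the rays $\Im z^{m}=0$, $\Re z^{m}\le0$), which is out of reach of the paper's method, since its error bound $\sum_{k=2}^{m-1}\prod_{j}|q_{j,k}|<m-2$ only stays below the main term's amplitude $2$ when $m\le4$; the paper's machinery, by contrast, also handles odd $m$ --- in particular the sector result of Theorem \ref{thm:zerossectorpoly} for $m=3$ --- where no factorization into mod-$2$ steps exists, so your technique cannot substitute for it there.
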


For an example of Theorems \ref{thm:zerossectorpoly} and \ref{thm:zerohalfplanepoly},
we consider
\begin{align*}
P(z) & =(z+1+i)^{2}(z+1-i)^{2}(z+1)^{3}(z+4+2i)(z+4-2i)\\
 & =z^{9}+15z^{8}+99z^{7}+369z^{6}+876z^{5}+1392z^{4}+1492z^{3}+1044z^{2}+432z+80.
\end{align*}
In the case $(m,r)=(3,0)$ or $(m,r)=(4,0)$, the polynomial $P_{r}(z)$
is 
\[
z^{9}+369z^{6}+1492z^{3}+80
\]
or 
\[
80+1392z^{4}+15z^{8}
\]
respectively. The zeros of $P_{r}(z)$ and the rays containing these
zeros are plotted in Figure \ref{fig:zerospoly}. 

\begin{figure}
\begin{centering}
\includegraphics[scale=0.4]{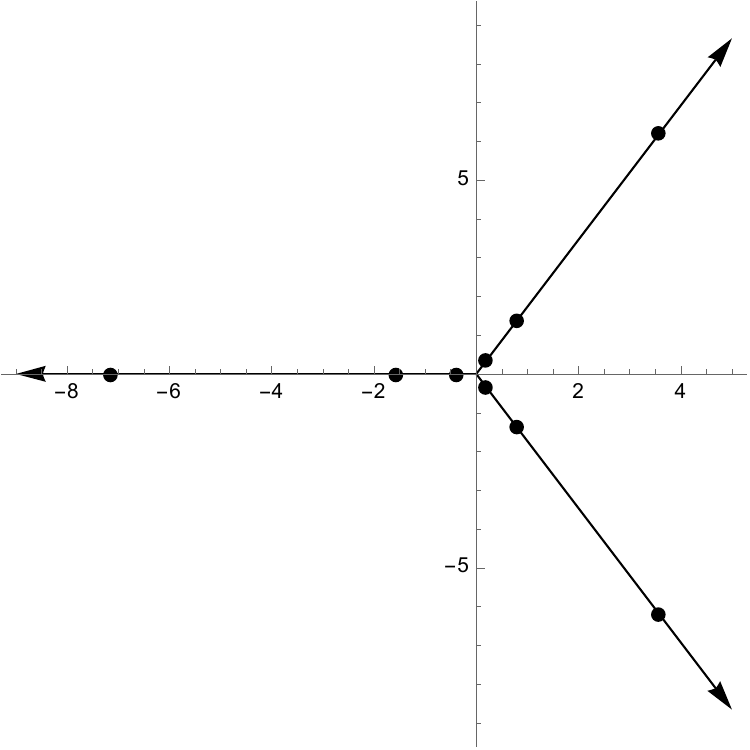}$\qquad$\includegraphics[scale=0.4]{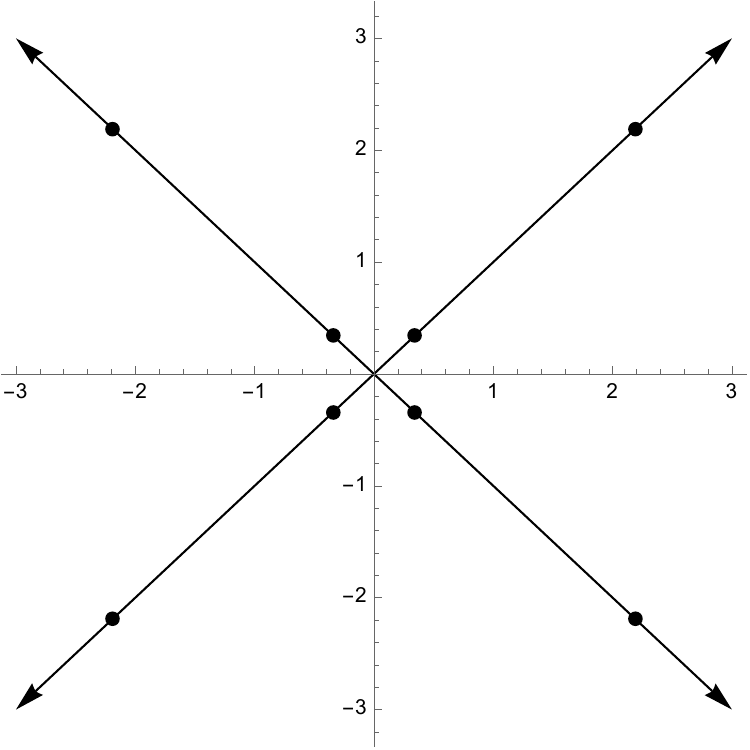}
\par\end{centering}
\caption{\label{fig:zerospoly}Zeros of $P_{r}(z)$ when $(m,r)=(3,0)$ (left)
and $(m,r)=(4,0)$ (right)}
\end{figure}

As we see in the statements of Theorems \ref{thm:zerossectorpoly}
and \ref{thm:zerohalfplanepoly}, the condition of the region (whether
a sector of the plane or a half-plane) containing zeros of $P(z)$
depends of the parity of $m$. Indeed, the conclusion of Theorem \ref{thm:zerossectorpoly}
will not hold if we replace this region, $2\pi/3<\left|\Arg z\right|\le\pi$,
by the half-plane $\Re z<0$. For example, the zeros of 
\[
P(z)=((z+1)^{2}+10^{2})^{3}=z^{6}+6z^{5}+315z^{4}+1220z^{3}+31815z^{2}+61206z+1030301
\]
are $-1\pm10i$ which lie on the open left half plane. For $m=3$
and $r=0$, the zeros of 
\[
P_{r}(z)=z^{6}+1220z^{3}+1030301
\]
 do not lie on the given rays as seen in Figure \ref{fig:counterexp}.

\begin{figure}
\begin{centering}
\includegraphics[scale=0.5]{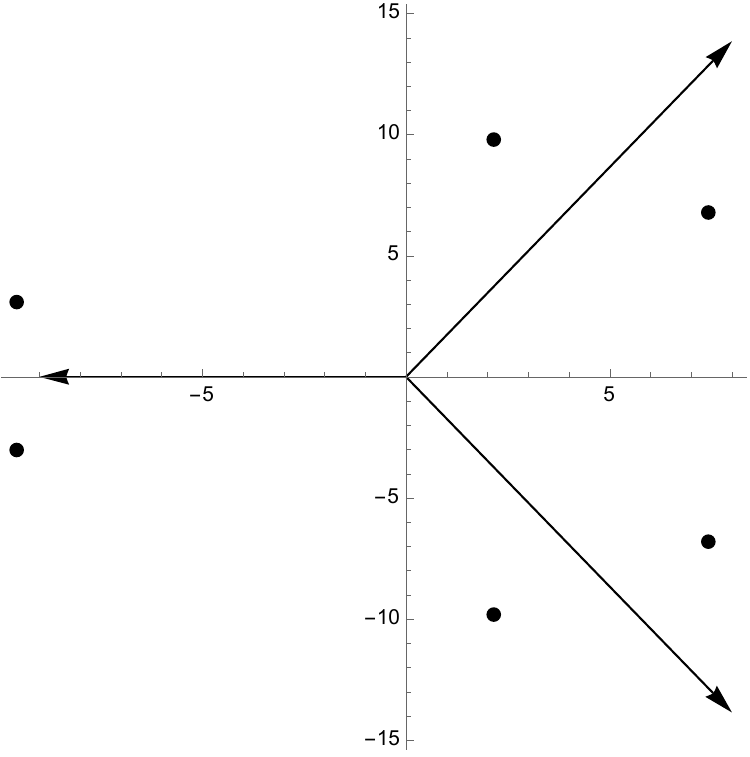}
\par\end{centering}
\caption{\label{fig:counterexp}Zeros of $z^{6}+1220z^{3}+1030301$}

\end{figure}

We state the last two theorems of the paper and provide an example
for each theorem.
\begin{thm}
\label{thm:zerossectorentire}Suppose $f(z)=\sum_{j=0}^{\infty}a_{j}z^{j}$,
$a_{j}\in\mathbb{R}$, is a real entire function whose zeros lie on
the plane sector $2\pi/3<\left|\Arg z\right|\le\pi$. If $f(z)$ has
the Weierstrass factorization 
\[
f(z)=z^{p}e^{Bz+C}\prod_{j=1}^{\infty}\left(1-\frac{z}{z_{j}}\right)e^{z/z_{j}}
\]
where $\sum_{j=1}^{\infty}1/|z_{j}|$ converges and $B+\sum_{j=1}^{\infty}\frac{1}{z_{j}}>0$,
then for any $0\le r<3$, the zeros of the entire function 
\[
f_{r}(z)=\sum_{j\equiv r\pmod3}a_{j}z^{j}
\]
lie on the $3$ radial rays: $\Im z^{3}=0$ and $\Re z^{3}\le0$. 
\end{thm}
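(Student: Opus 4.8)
The plan is to realize $f$ as a locally uniform limit of real polynomials whose zeros all lie in the sector $2\pi/3<|\Arg z|\le\pi$, to apply Theorem~\ref{thm:zerossectorpoly} to each of them, and then to pass to the limit by Hurwitz's theorem.

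First I would collapse the Weierstrass factorization into a cleaner form. Since $\sum_{j}1/|z_{j}|$ converges, the series $\sum_{j}1/z_{j}$ converges absolutely and the convergence factors can be absorbed into the exponential: with $\beta:=B+\sum_{j=1}^{\infty}1/z_{j}$ one has
\[
f(z)=z^{p}e^{C}e^{\beta z}\prod_{j=1}^{\infty}\left(1-\frac{z}{z_{j}}\right)=:z^{p}e^{C}e^{\beta z}\,\Pi(z),
\]
the product $\Pi$ converging locally uniformly for the same reason. Because $f$ has real coefficients its nonreal zeros occur in conjugate pairs, which forces $\beta$ and $e^{C}=a_{p}$ to be real, and $\beta>0$ is exactly the standing hypothesis. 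Writing $f=z^{p}g$ with $g=e^{C}e^{\beta z}\Pi$ and using $f_{r}(z)=z^{p}g_{r'}(z)$ for the appropriate $r'\in\{0,1,2\}$ (the origin lying on every ray), I may assume $p=0$, so that $g=f$ and $f(0)=e^{C}\ne0$.

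Next I would introduce the approximants
\[
P_{N}(z)=e^{C}\left(1+\frac{\beta z}{N}\right)^{N}\prod_{j=1}^{N}\left(1-\frac{z}{z_{j}}\right),
\]
taking $N$ through indices that complete each conjugate pair so that $P_{N}\in\mathbb{R}[z]$. The zeros of $P_{N}$ are $z_{1},\dots,z_{N}$, which lie in the sector by hypothesis, together with $-N/\beta$, which lies on the negative real axis and hence in the sector precisely because $\beta>0$; this is the one place where the sign condition $B+\sum_{j}1/z_{j}>0$ is used. Since $(1+\beta z/N)^{N}\to e^{\beta z}$ and the partial products converge to $\Pi$, both locally uniformly, we have $P_{N}\to f$ locally uniformly. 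Invoking the multisection identity $Q_{r}(z)=\tfrac{1}{3}\sum_{k=0}^{2}\omega^{-rk}Q(\omega^{k}z)$ with $\omega=e^{2\pi i/3}$, the section map $Q\mapsto Q_{r}$ is continuous for locally uniform convergence, so $(P_{N})_{r}\to f_{r}$ locally uniformly; and by Theorem~\ref{thm:zerossectorpoly} every zero of each $(P_{N})_{r}$ lies on the closed set $S=\{\,z:\Im z^{3}=0,\ \Re z^{3}\le0\,\}$.

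It remains to apply Hurwitz's theorem, for which I must check that $f_{r}\not\equiv0$. Here I would compare growth along the positive real axis. From $\Re z_{j}<0$ one gets $|1-x/z_{j}|\ge1$ for every $x>0$, so $|f(x)|\ge|e^{C}|e^{\beta x}\to\infty$; on the other hand, because the zeros avoid the directions $\pm2\pi/3$ and $\sum_{j}1/|z_{j}|<\infty$ forces the zero-counting function to be $o(r)$, a standard estimate gives $\log|\Pi(xe^{\pm2\pi i/3})|=o(x)$, whence $|f(\omega^{k}x)|\le|e^{C}|e^{-\beta x/2+o(x)}\to0$ for $k=1,2$. Thus $f_{r}(x)\sim\tfrac{1}{3}f(x)\to\infty$, so $f_{r}\not\equiv0$. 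Hurwitz's theorem then realizes every zero of $f_{r}$ as a limit of zeros of the $(P_{N})_{r}$, all of which lie in the closed set $S$; hence the zeros of $f_{r}$ lie on the three rays $\Im z^{3}=0$, $\Re z^{3}\le0$. I expect the main obstacle to be this last nonvanishing estimate, together with the verification that the approximants $P_{N}$ are real and keep all their zeros inside the sector (which is where $\beta>0$ enters); the remaining steps are the routine continuity of multisection and a standard Hurwitz limit argument.
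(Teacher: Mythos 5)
Your proposal is correct and follows essentially the same route as the paper: absorb the convergence factors into $e^{\beta z}$ with $\beta=B+\sum_j 1/z_j>0$, approximate $f$ locally uniformly by the real polynomials $e^{C}\left(1+\beta z/N\right)^{N}\prod_{j=1}^{N}\left(1-z/z_{j}\right)$ whose zeros (including $-N/\beta$) lie in the sector, apply Theorem \ref{thm:zerossectorpoly} to each section, and pass to the limit. In fact you are somewhat more careful than the paper, which leaves implicit both the completion of conjugate pairs (so the approximants are real polynomials) and the nonvanishing $f_{r}\not\equiv0$ needed for Hurwitz's theorem; your growth estimate settles the latter point cleanly.
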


For an example, we consider 
\[
f(z)=e^{2z}(z+1+i)(z+1-i)
\]
and note that the zeros $-1\pm i$ of $f(z)$ lie on the sector $2\pi/3<\left|\Arg z\right|\le\pi$
and 
\[
2+\frac{1}{-1+i}+\frac{1}{-1-i}=1>0.
\]
The function $f_{r}(z)$ for $m=3$ and $r=0$ has infinitely many
zeros, some of them are plotted in Figure \ref{fig:entirem3}.

\begin{figure}
\begin{centering}
\includegraphics[scale=0.5]{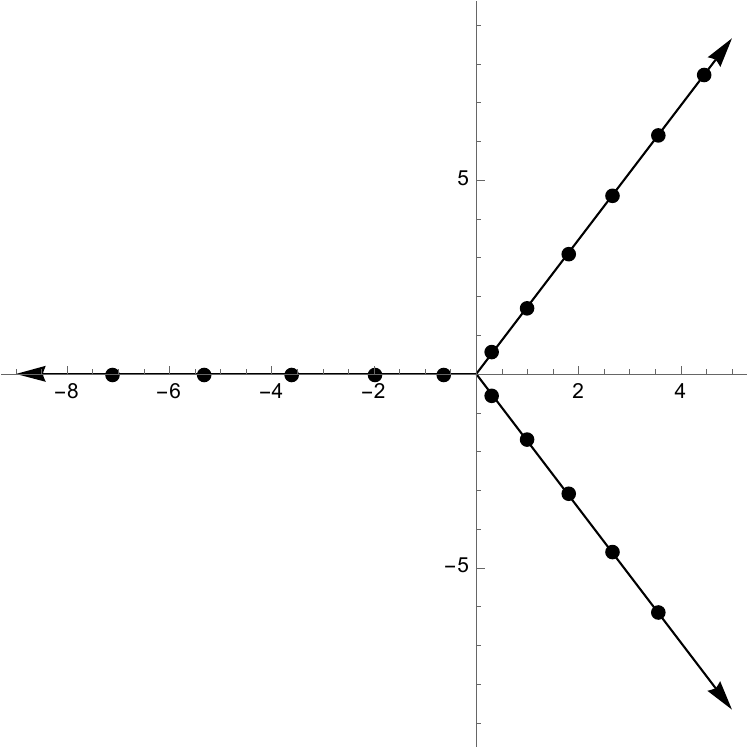}
\par\end{centering}
\caption{\label{fig:entirem3}Zeros of $f_{r}(z)$ for $f(z)=e^{2z}(z+1+i)(z+1-i)$}

\end{figure}

\begin{thm}
\label{thm:zeroshalfplaneentire}Suppose $f(z)=\sum_{j=0}^{\infty}a_{j}z^{j}$,
$a_{j}\in\mathbb{R}$, is a real entire function whose zeros lie on
the left half plane $\Re z<0$. If $f(z)$ has the Weierstrass factorization
\[
f(z)=z^{p}e^{Az^{2}+Bz+C}\prod_{j=1}^{\infty}\left(1-\frac{z}{z_{j}}\right)e^{z/z_{j}}
\]
where $A\ge0$, $\sum_{j=1}^{\infty}1/|z_{j}|$ converges, and $B+\sum_{j=1}^{\infty}\frac{1}{z_{j}}>0$,
then for any $0\le r<4$, the zeros of the entire function 
\[
f_{r}(z)=\sum_{j\equiv r\pmod4}a_{j}z^{j}
\]
lie on the $4$ radial rays: $\Im z^{4}=0$ and $\Re z^{4}\le0$. 
\end{thm}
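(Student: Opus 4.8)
The plan is to deduce this from the polynomial case, Theorem \ref{thm:zerohalfplanepoly}, by approximating $f$ by real polynomials whose zeros lie in the open left half plane and then invoking Hurwitz's theorem. The engine of the reduction is the roots-of-unity filter: for any entire $g(z)=\sum_{j}a_{j}z^{j}$,
\[
g_{r}(z)=\frac{1}{4}\sum_{k=0}^{3}i^{-kr}g(i^{k}z),
\]
because $\frac{1}{4}\sum_{k=0}^{3}i^{k(j-r)}$ equals $1$ when $j\equiv r\pmod 4$ and $0$ otherwise. Its decisive property is continuity under local uniform convergence: if $P_{N}\to g$ locally uniformly, then $(P_{N})_{r}\to g_{r}$ locally uniformly, since $g_{r}$ is a fixed finite linear combination of the dilates $g(i^{k}z)$ and local uniform convergence is preserved by the substitutions $z\mapsto i^{k}z$.

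First I would reduce to a clean factorization. The constant $e^{C}$ merely scales all coefficients and does not move the zeros of $f_{r}$, so I discard it. Writing $f=e^{C}z^{p}g$, the coefficient shift yields $f_{r}(z)=e^{C}z^{p}\,g_{(r-p)\bmod 4}(z)$, so it suffices to treat $g$ for every residue (the extra zero at the origin already lies on the rays, as $0^{4}=0$). Since $\sum_{j}1/|z_{j}|$ converges, the canonical product $\prod_{j}(1-z/z_{j})$ converges at genus zero and $\prod_{j}e^{z/z_{j}}=e^{z\sum_{j}1/z_{j}}$, so
\[
g(z)=e^{Az^{2}+B'z}\prod_{j=1}^{\infty}\left(1-\frac{z}{z_{j}}\right),\qquad B':=B+\sum_{j=1}^{\infty}\frac{1}{z_{j}}>0,
\]
where $B'$ is real because the nonreal $z_{j}$ occur in conjugate pairs, and where every $z_{j}$ satisfies $\Re z_{j}<0$.

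Next the approximation. I would set
\[
P_{N}(z)=\left(1+\frac{Az^{2}+B'z}{N}\right)^{N}\prod_{|z_{j}|\le N}\left(1-\frac{z}{z_{j}}\right),
\]
which is a real polynomial (conjugate zeros have equal modulus, so the truncation $|z_{j}|\le N$ never splits a pair), and each factor converges locally uniformly to the corresponding factor of $g$, so $P_{N}\to g$ locally uniformly. The key point — and the one place where the hypotheses $A\ge 0$ and $B'>0$ are used decisively — is that \emph{every} zero of $P_{N}$ lies in $\Re z<0$. The partial-product zeros are among the $z_{j}$, already in the open left half plane; and the zeros of $Az^{2}+B'z+N$ have product $N/A>0$ and sum $-B'/A<0$, hence (whether real or a conjugate pair) real part $-B'/(2A)<0$, while for $A=0$ the single zero is $-N/B'<0$. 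Thus Theorem \ref{thm:zerohalfplanepoly} applies to each $P_{N}$, and the zeros of $(P_{N})_{r}$ lie on the rays $\Im z^{4}=0$, $\Re z^{4}\le 0$.

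Finally I would pass to the limit: by continuity of the filter $(P_{N})_{r}\to g_{r}$ locally uniformly, so (assuming $g_{r}\not\equiv 0$, else there is nothing to prove) Hurwitz's theorem shows that every zero of $g_{r}$ is a limit of zeros of the $(P_{N})_{r}$; since those all lie on the closed set $\{\Im z^{4}=0,\ \Re z^{4}\le 0\}$, so does each zero of $g_{r}$, and therefore of $f_{r}=e^{C}z^{p}g_{(r-p)\bmod 4}$. The \emph{main obstacle} is precisely the $e^{Az^{2}}$ factor, the new feature of this order-$2$ case relative to Theorem \ref{thm:zerossectorentire}: its naive approximant $(1+Az^{2}/N)^{N}$ has its zeros on the imaginary axis, outside $\Re z<0$, so the polynomial theorem cannot be applied to it directly. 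Approximating the combined factor $e^{Az^{2}+B'z}$, whose quadratic approximant is pushed strictly into the open left half plane exactly because $B'>0$, is the device that makes the whole reduction go through, and verifying this placement of zeros is the heart of the argument.
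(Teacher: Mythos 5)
Your proposal is correct and takes essentially the same route as the paper: collect the $e^{z/z_{j}}$ factors, approximate by real polynomials $\left(1+(Az^{2}+B'z)/N\right)^{N}\prod\left(1-z/z_{j}\right)$ whose zeros lie in the open left half plane by Vieta's formulas (exactly where $A\ge0$ and $B'>0$ enter), apply Theorem \ref{thm:zerohalfplanepoly} to each approximant, and pass to the limit by locally uniform convergence and Hurwitz's theorem. Your write-up is in fact slightly more careful than the paper's on two small points: truncating the product by $|z_{j}|\le N$ so that conjugate pairs are never split (keeping $P_{N}$ real), and factoring out $z^{p}e^{C}$ so that the approximating polynomials have no zero at the origin, which Theorem \ref{thm:zerohalfplanepoly} formally excludes.
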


For the plot of some zeros of $f_{r}(z)$ when $m=4$, $r=0$, and
\[
f_{r}(z)=e^{z^{2}+2z}(z+1+i)(z+1-i),
\]
see Figure \ref{fig:entirem4}.

\begin{figure}
\begin{centering}
\includegraphics[scale=0.5]{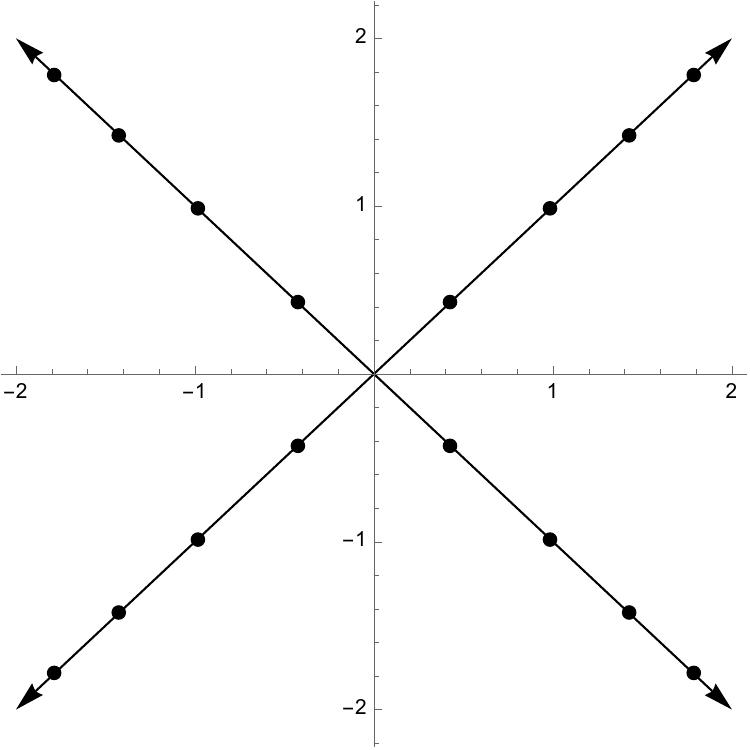}
\par\end{centering}
\caption{\label{fig:entirem4}Zeros of $f_{r}(z)$ for $f(z)=e^{z^{2}+2z}(z+1+i)(z+1-i)$}

\end{figure}

\section{Polynomials with arithmetic exponents}

In this section, we will prove Theorems \ref{thm:zerossectorpoly}
and \ref{thm:zerohalfplanepoly}. We first note that it suffices to
prove these theorems when the lead coefficient of $P(z)$ is $1$.
For any $m\in\mathbb{N}$ and $0\le r<m$, if we let
\[
P_{r}(z)=\sum_{\substack{j\equiv r\pmod m\\
0\le j\le n
}
}a_{j}z^{j}=\sum_{j=0}^{\left\lfloor n/m\right\rfloor }a_{mj+r}z^{mj+r},
\]
then
\begin{equation}
e^{-r\pi i/m}P_{r}(e^{\pi i/m}z)=\sum_{j=0}^{\left\lfloor n/m\right\rfloor }(-1)^{j}a_{jm+r}z^{jm+r}.\label{eq:P_rrotation}
\end{equation}
Let $H(z)$ be the polynomial in the right side of this equation and
$\omega_{k}=e^{(2k-1)\pi i/m}$, $0\le k<m$, be the $m$-th roots
of $-1$. We note from 
\[
P(z)=\sum_{j=0}^{n}a_{j}z^{j}
\]
that 
\begin{equation}
\sum_{k=0}^{m-1}\omega_{k}^{-r}P(\omega_{k}z)=\sum_{k=0}^{m-1}\sum_{j=0}^{n}\omega_{k}^{-r+j}a_{j}z^{j}.\label{eq:sumrootsneg1}
\end{equation}
Since $\omega_{k}$, $0\le k<m$, are the $m$-th roots of $-1$,
we have
\[
\sum_{k=0}^{m-1}\omega_{k}^{j-r}=\begin{cases}
0 & \mbox{if }j\not\equiv r\mbox{ mod }m\\
m(-1)^{(j-r)/m} & \mbox{if }j\equiv r\mbox{ mod }m
\end{cases}.
\]
We conclude from this equation, (\ref{eq:P_rrotation}), and (\ref{eq:sumrootsneg1})
that
\[
\sum_{k=0}^{m-1}\omega_{k}^{-r}P(\omega_{k}z)=m\sum_{\substack{j\equiv r\pmod m\\
0\le j\le n
}
}(-1)^{(j-r)/m}a_{j}z^{j}=mH(z).
\]
Since $z_{j}$, $1\le j\le n$, are the zeros of the monic polynomial
$P(z)$, we have 
\[
P(z)=\prod_{j=1}^{n}(z-z_{j})
\]
and consequently
\begin{equation}
mH(z)=\sum_{k=0}^{m-1}\omega_{k}^{-r}P(\omega_{k}z)=\sum_{k=0}^{m-1}\omega_{k}^{-r}\prod_{j=1}^{n}(\omega_{k}z-z_{j}).\label{eq:Hformula}
\end{equation}

To prove Theorems \ref{thm:zerossectorpoly} and \ref{thm:zerohalfplanepoly},
we will show that the zeros of 
\[
H(z)=e^{-r\pi i/m}P_{r}(e^{\pi i/m}z)
\]
lie on the $m$ radial rays $\Im z^{m}=0$ and $\Re z^{m}\ge0$. We
note that the positive real ray is one of them. Our main strategy
is to show that $H(z)$ has $n$ zeros on these rays and the claim
will follow from the fundamental theorem of algebra. From (\ref{eq:P_rrotation}),
we conclude that $H(z)$ has a zero at $z=0$ or order $r$ and if
$z>0$ is a positive real zero of $H(z)$, then so are $e^{k\pi i/m}z$,
$0\le k<m$. Thus it suffices to show that $H(z)$ has $\left\lfloor n/m\right\rfloor $
positive real zeros. 

For the reason mentioned above, we consider the polynomial $H(z)$
when $z$ is a positive real number. For each $1\le j\le n$ and $0\le k<m$,
and $z\in\mathbb{R}^{+}$, we let 
\begin{equation}
q_{j,k}=\frac{\omega_{k}z-z_{j}}{\omega_{0}z-\overline{z_{j}}}\label{eq:qjkdef}
\end{equation}
and 
\begin{equation}
\theta_{j}=\Arg(\omega_{1}z-z_{j})\label{eq:thetajdef}
\end{equation}
where $\omega_{k}=e^{(2k-1)\pi i/m}$. Since 
\[
\Re(\omega_{1}z-z_{j})>0,
\]
we have 
\begin{equation}
-\pi/2<\theta_{j}<\pi/2.\label{eq:thetajrange}
\end{equation}
We normalize $mH(z)$ by dividing the right side of (\ref{eq:Hformula})
by $\prod_{j=1}^{n}(\omega_{0}z-\overline{z_{j}})e^{i\theta_{j}}$
and conclude this quotient is $\sum_{k=0}^{m-1}\omega_{k}^{-r}\prod_{j=1}^{n}q_{j,k}e^{-i\theta_{j}}$
or equivalently
\begin{equation}
g(\theta):=\sum_{k=0}^{m-1}\omega_{k}^{-r}e^{-in\theta}\prod_{j=1}^{n}q_{j,k}\label{eq:gtheta}
\end{equation}
where $\theta:=\sum_{j=1}^{n}\theta_{j}$. The two lemmas below show
the first two terms (when $k=0$ and $k=1$) in (\ref{eq:gtheta})
dominate the other terms in the sum. 
\begin{lem}
\label{lem:modulusq-sector}Suppose $z\in\mathbb{R}^{+}$ and $q_{j,k}$,
$1\le j\le n$, $0\le k<m$, are defined as in (\ref{eq:qjkdef}).
For any $m\ge2$, if $\pi-\pi/m<\left|\Arg z_{j}\right|\le\pi$ for
all $1\le j\le n$, then for any $0\le k<m$ we have 
\begin{equation}
\prod_{j=1}^{n}|q_{j,k}|\le1.\label{eq:prodineq}
\end{equation}
Moreover the equation occurs if and only if $k=0$ or $k=1$. 
\end{lem}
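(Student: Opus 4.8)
The plan is to bound each factor $|q_{j,k}|$ separately, reducing the inequality $|q_{j,k}|\le1$ to a comparison of two cosines, and then to treat the three regimes $k=1$, $2\le k\le m-1$, and $k=0$ by different mechanisms. First I would write $z_j=r_je^{i\phi_j}$ with $\pi-\pi/m<|\phi_j|\le\pi$ and expand the squared moduli. Because $z\in\mathbb{R}^+$, both $|\omega_kz-z_j|^2$ and $|\omega_0z-\overline{z_j}|^2$ share the terms $z^2+|z_j|^2$, so after cancellation $|q_{j,k}|\le1$ is equivalent to $\Re(\omega_k\overline{z_j})\ge\Re(\omega_0z_j)$, that is to $\cos((2k-1)\pi/m-\phi_j)\ge\cos(\phi_j-\pi/m)$. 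A sum-to-product step turns this into the central identity
\[
\cos\!\left(\frac{(2k-1)\pi}{m}-\phi_j\right)-\cos\!\left(\phi_j-\frac{\pi}{m}\right)=-2\sin\frac{(k-1)\pi}{m}\,\sin\!\left(\frac{k\pi}{m}-\phi_j\right),
\]
so that $|q_{j,k}|\le1$ holds exactly when $\sin\frac{(k-1)\pi}{m}\,\sin(k\pi/m-\phi_j)\le0$.

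When $k=1$ the factor $\sin\frac{(k-1)\pi}{m}=\sin0$ vanishes, so every individual factor satisfies $|q_{j,1}|=1$ (equivalently, the denominator of $q_{j,1}$ is the complex conjugate of its numerator, since $\overline{\omega_0}=\omega_1$ and $z$ is real), and the product equals $1$. For $2\le k\le m-1$ the factor $\sin\frac{(k-1)\pi}{m}$ is strictly positive, so I would show $\sin(k\pi/m-\phi_j)<0$. This is precisely where the sector hypothesis enters: from $\pi-\pi/m<|\phi_j|\le\pi$ one checks that for $\phi_j$ in the upper sector the angle $k\pi/m-\phi_j$ lies in $(-\pi,0)$, while for the lower sector it lies in $(\pi,2\pi)$; in either case the sine is negative, so $|q_{j,k}|<1$ for every $j$ and the product is strictly less than $1$. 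It is worth noting that the lower bound $\pi-\pi/m$ is exactly the threshold that makes this estimate work simultaneously for all of $k=2,\dots,m-1$, which is why a weaker region is not admissible for general $m$.

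The remaining case $k=0$ is the \textbf{main obstacle}, because there the per-factor bound genuinely fails: for a zero in the open upper sector both $\sin(-\pi/m)$ and $\sin(-\phi_j)$ are negative, so their product is positive and the criterion $\le0$ fails, forcing $|q_{j,0}|>1$. Here I would use that $P$ has real coefficients, so its zeros form a conjugate-symmetric multiset. Pairing $z_j$ with $z_{j'}=\overline{z_j}$ one finds $q_{j',0}=1/q_{j,0}$, so the two moduli cancel, while a negative real zero self-pairs with $|q_{j,0}|=1$; equivalently $\prod_j|\omega_0z-z_j|=\prod_j|\omega_0z-\overline{z_j}|$, so the product telescopes to $1$. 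Combining the three regimes gives $\prod_{j=1}^n|q_{j,k}|\le1$ with equality if and only if $k\in\{0,1\}$.

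The only points demanding care are the boundary $\phi_j=\pi$ and small $m$, where I would verify that the strict inequalities for $2\le k\le m-1$ persist (at $\phi_j=\pi$ the angle $k\pi/m-\pi$ still lies strictly inside $(-\pi,0)$ for those $k$), and the trivial case $m=2$, where only $k=0,1$ occur and the product is identically $1$, consistent with the stated equality condition.
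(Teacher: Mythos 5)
Your proof is correct and follows essentially the same route as the paper: the same expansion of squared moduli reducing $|q_{j,k}|\le 1$ to a comparison of cosines, the same conjugate-pairing of zeros to handle $k=0$ (where, as you note, the per-factor bound genuinely fails), the conjugate observation for $k=1$, and a per-factor strict inequality for $2\le k<m$. The only difference is cosmetic: where the paper verifies the cosine inequality by shifting the angle to $\phi_j^*\in(\pi-\pi/m,\pi+\pi/m)$ and comparing cosines on the resulting intervals, you factor the difference of cosines via sum-to-product and do a sign analysis of the two sine factors, which amounts to the same estimate.
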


\begin{proof}
In the case $k=1$, we have
\begin{equation}
\prod_{j=1}^{n}q_{j,1}=\prod_{j=1}^{n}\frac{\omega_{1}z-z_{j}}{\omega_{0}z-\overline{z_{j}}}=\prod_{j=1}^{n}e^{2i\theta_{j}}=e^{2i\theta}\label{eq:prodq1}
\end{equation}
where the second equation comes from (\ref{eq:thetajdef}) and the
fact that $\omega_{1}z-z_{j}$ and $\omega_{0}z-\overline{z_{j}}$
are two complex conjugates. Thus 
\[
\prod_{j=1}^{n}|q_{j,1}|=1.
\]
If $k=0$ and $z_{j}\in\mathbb{R}$ then $q_{j,0}=1$. On the other
hand, if $k=0$ and $z_{j}\notin\mathbb{R}$, then both $z_{j}$ and
$\overline{z}_{j}$ are zeros of $P(z)$. As a consequence, the $\prod_{j=1}^{n}q_{j,0}$
contains the factor 
\[
\frac{\omega_{0}z-z_{j}}{\omega_{0}z-\overline{z_{j}}}\cdot\frac{\omega_{0}z-\overline{z_{j}}}{\omega_{0}z-z_{j}}=1
\]
from which we deduce that
\begin{equation}
\prod_{j=1}^{n}q_{j,0}=1.\label{eq:prodq0}
\end{equation}

To complete the lemma, it remains to show 
\[
\prod_{j=1}^{n}|q_{j,k}|<1
\]
for $2\le k<m$ and $m>2$. If $z_{j}=r_{j}e^{i\phi_{j}}$, $\pi-\pi/m<|\phi_{j}|\le\pi$,
then for $2\le k<m$, 
\begin{eqnarray}
|\omega_{k}z-z_{j}|^{2} & = & \left(z\cos\frac{(2k-1)\pi}{m}-r_{j}\cos\phi_{j}\right)^{2}+\left(z\sin\frac{(2k-1)\pi}{m}-r_{j}\sin\phi_{j}\right)^{2}\nonumber \\
 & = & z^{2}+r_{j}^{2}-2zr_{j}\cos\left(\phi_{j}-\frac{(2k-1)\pi}{m}\right)\nonumber \\
 & = & z^{2}+r_{j}^{2}-2zr_{j}\cos\left(\phi_{j}^{*}-\frac{(2k-1)\pi}{m}\right)\label{eq:absnum}
\end{eqnarray}
where 
\[
\phi_{j}^{*}=\begin{cases}
\phi_{j} & \text{ if }\phi_{j}\in(\pi/2,\pi]\\
\phi_{j}+2\pi & \text{ if }\phi_{j}\in(-\pi,-\pi/2)
\end{cases}.
\]
Similarly
\begin{equation}
|\omega_{0}z-\overline{z_{j}}|^{2}=z^{2}+r_{j}^{2}-2zr_{j}\cos\left(\phi_{j}^{*}-\frac{\pi}{m}\right).\label{eq:absdenom}
\end{equation}
From the assumption $\pi-\pi/m<|\phi_{j}|\le\pi$, one has $\pi-\pi/m<\phi_{j}^{*}<\pi+\pi/m$
and consequently
\[
\frac{m-2k+2}{m}\pi>\phi_{j}^{*}-\frac{(2k-1)\pi}{m}>\frac{m-2k}{m}\pi,\qquad2\le k<m,
\]
and 
\[
\pi>\phi_{j}^{*}-\frac{\pi}{m}>\frac{m-2}{m}\pi.
\]
Thus 
\[
\cos\left(\phi_{j}^{*}-\frac{(2k-1)\pi}{m}\right)>\cos\left(\phi_{j}^{*}-\frac{\pi}{m}\right).
\]
We compare (\ref{eq:absnum}) and (\ref{eq:absdenom}) and deduce
that $|q_{j,k}|=|\omega_{k}z-z_{j}|/|\omega_{0}z-\overline{z_{j}}|<1$
for all $1\le j\le n$ and $2\le k<m$.
\end{proof}
In the case $m$ is even, we can expand the sector containing $z_{j}$,
$1\le j\le n$, in the condition of Lemma \ref{lem:modulusq-sector}
to a half plane in the lemma below.
\begin{lem}
\label{lem:modulusq-halfplane}Suppose $z\in\mathbb{R}^{+}$ and $q_{j,k}$,
$1\le j\le n$, $0\le k<m$, are defined as in (\ref{eq:qjkdef}).
If $\Re z_{j}<0$ for all $1\le j\le n$ and $m\ge2$ is even, then
for any $0\le k<m$ we have 
\[
\prod_{j=1}^{n}|q_{j,k}|\le1.
\]
Moreover the equation occurs if and only if $k=0$ or $k=1$. 
\end{lem}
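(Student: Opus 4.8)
The plan is to follow the two-case structure of Lemma \ref{lem:modulusq-sector}, reusing its argument verbatim wherever the region hypothesis was never actually needed, and introducing a new idea only for the indices $2\le k<m$. First observe that the cases $k=0$ and $k=1$ require no change at all. The computation (\ref{eq:prodq1}) yielding $\prod_{j}q_{j,1}=e^{2i\theta}$ used only that $\omega_{1}z-z_{j}$ and $\omega_{0}z-\overline{z_{j}}$ are complex conjugates (true because $z\in\mathbb{R}^{+}$ and $\overline{\omega_{1}}=\omega_{0}$), and the cancellation (\ref{eq:prodq0}) giving $\prod_{j}q_{j,0}=1$ used only that $P$ is real, so that its non-real zeros come in conjugate pairs. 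Neither fact refers to the location of the $z_{j}$, so $\prod_{j}|q_{j,0}|=\prod_{j}|q_{j,1}|=1$ exactly as before.

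The substance is the strict inequality $\prod_{j}|q_{j,k}|<1$ for $2\le k<m$. Here, in contrast to the sector lemma, one cannot argue factor by factor: under the weaker hypothesis $\Re z_{j}<0$ an individual $|q_{j,k}|$ may exceed $1$ (for instance $m=4$, $k=3$, with $z_{j}$ near the positive imaginary axis). The fix I would use is to exploit that the zero set is closed under conjugation and to estimate the product over each conjugate pair $\{z_{j},\overline{z_{j}}\}$ at once. Writing $z_{j}=r_{j}e^{i\phi_{j}}$ with $\phi_{j}\in(\pi/2,\pi]$ (without loss of generality) and $\beta_{k}=(2k-1)\pi/m$, the formulas (\ref{eq:absnum}) and (\ref{eq:absdenom}) give
\[
|q_{j,k}|^{2}\,|q_{j',k}|^{2}=\frac{\bigl(z^{2}+r_{j}^{2}-2zr_{j}\cos(\phi_{j}-\beta_{k})\bigr)\bigl(z^{2}+r_{j}^{2}-2zr_{j}\cos(\phi_{j}+\beta_{k})\bigr)}{\bigl(z^{2}+r_{j}^{2}-2zr_{j}\cos(\phi_{j}-\beta_{1})\bigr)\bigl(z^{2}+r_{j}^{2}-2zr_{j}\cos(\phi_{j}+\beta_{1})\bigr)},
\]
where $j'$ indexes $\overline{z_{j}}$. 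Expanding the two numerators and denominators using $\cos(\phi_{j}-\beta)+\cos(\phi_{j}+\beta)=2\cos\phi_{j}\cos\beta$ and $\cos(\phi_{j}-\beta)\cos(\phi_{j}+\beta)=\cos^{2}\phi_{j}-\sin^{2}\beta$, the inequality ``pair product $\le 1$'' collapses to
\[
(z^{2}+r_{j}^{2})\cos\phi_{j}\,(\cos\beta_{1}-\cos\beta_{k})+zr_{j}\,(\sin^{2}\beta_{1}-\sin^{2}\beta_{k})\le 0.
\]

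Now $\Re z_{j}<0$ forces $\cos\phi_{j}<0$, so the above follows from two purely angular facts: $\cos\beta_{1}>\cos\beta_{k}$ and $\sin^{2}\beta_{1}\le\sin^{2}\beta_{k}$. The first holds because the distance of $\beta_{k}=(2k-1)\pi/m$ to $2\pi\mathbb{Z}$ is at least $3\pi/m>\pi/m$ for $2\le k<m$, so the first term is strictly negative; the second makes the second term nonpositive, and I expect this is the crux of the whole lemma. The inequality $\sin^{2}\beta_{1}\le\sin^{2}\beta_{k}$ is equivalent to $\mathrm{dist}(2k-1,\,m\mathbb{Z})\ge 1$, and here evenness of $m$ is exactly what is needed: since $m$ is even, the odd integer $2k-1$ is never a multiple of $m$, so the distance is always at least $1$, with equality only when $2k-1\equiv\pm1\pmod m$, i.e.\ $k=m/2$ or $k=1+m/2$; at those two indices the sine term vanishes but the cosine term remains strictly negative, so the pair product is still $<1$. (For odd $m$ one could have $2k-1\equiv0$, giving $\beta_{k}=\pi$ and reversing the sine inequality; this is precisely why the half-plane hypothesis fails for odd $m$, as the counterexample $z^{6}+1220z^{3}+1030301$ shows.) A real zero $z_{j}<0$ has no distinct conjugate and is handled directly: $\phi_{j}=\pi$ gives $|q_{j,k}|^{2}=(z^{2}+r_{j}^{2}+2zr_{j}\cos\beta_{k})/(z^{2}+r_{j}^{2}+2zr_{j}\cos\beta_{1})<1$ from $\cos\beta_{k}<\cos\beta_{1}$. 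Multiplying the contributions of all real zeros and all conjugate pairs then yields $\prod_{j}|q_{j,k}|<1$ strictly for every $2\le k<m$, so (\ref{eq:prodineq}) holds with equality if and only if $k=0$ or $k=1$.
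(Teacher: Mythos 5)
Your proof is correct, and it reaches the key inequality by a genuinely different route than the paper. The skeleton is shared: the cases $k=0,1$ are identical (via (\ref{eq:prodq0}) and (\ref{eq:prodq1})), and for $2\le k<m$ both arguments exploit conjugate pairing to compare $|\omega_{k}z-z_{j}||\omega_{k}z-\overline{z_{j}}|$ with the same quantity at $\omega_{1}$. But the paper splits the zeros into two groups---those with $\pi-\pi/m<|\Arg z_{j}|\le\pi$ are handled factor-by-factor by citing the proof of Lemma \ref{lem:modulusq-sector}, and only the remaining zeros are paired---and it proves the pair inequality by calculus: the function $f(\theta)=|ze^{i\theta}-z_{j}|^{2}|ze^{i\theta}-\overline{z_{j}}|^{2}$ has at most one critical point on $[\pi/m,\pi-\pi/m]$, satisfies $f'<0$ on $[\pi/m,\pi/2]$, and satisfies $f(\pi/m)>f(\pi-\pi/m)$, so its maximum on that interval occurs only at $\pi/m$. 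You instead treat every non-real zero by pairing (and real zeros directly), and you evaluate the comparison in closed form: your displayed inequality is exactly $f(\beta_{k})-f(\beta_{1})\le0$ after division by the positive factor $4zr_{j}$, as one can check against the expansion (\ref{eq:ftheta}). You then conclude by pure sign inspection: $\cos\beta_{1}>\cos\beta_{k}$ together with $\cos\phi_{j}<0$ makes the first term strictly negative, while evenness of $m$---an odd integer $2k-1$ is never a multiple of an even $m$---gives $\sin^{2}\beta_{k}\ge\sin^{2}\beta_{1}$, making the second term nonpositive. The paper's continuous argument is valid for an arbitrary angle in $[\pi/m,\pi-\pi/m]$, not only the discrete angles $\beta_{k}$; your discrete argument buys the elimination of both the calculus and the appeal to Lemma \ref{lem:modulusq-sector}, isolates exactly where evenness enters, identifies the borderline indices $k=m/2$ and $k=m/2+1$ (where the sine term vanishes but strictness survives through the cosine term), and explains from inside the proof why odd $m$ fails, consistent with the paper's counterexample $z^{6}+1220z^{3}+1030301$. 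Two trivial caveats: in the pair case you should take $\phi_{j}\in(\pi/2,\pi)$ strictly, since $\phi_{j}=\pi$ is your separately handled real case; and, as in the paper's own proof, the ``only if'' half of the equality statement implicitly assumes $n\ge1$.
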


\begin{proof}
For the case $k=0$ and $k=1$ we have (\ref{eq:prodq0}) and (\ref{eq:prodq1}).
We now consider $2\le k<m$. We note that if $\pi-\pi/m<|\arg z_{j}|\le\pi$,
then by the proof of Lemma \ref{lem:modulusq-sector}, $|q_{j,k}|<1$
for $2\le k<m$. It remains to show that 
\[
\prod_{j}|q_{j,k}|<1
\]
where the product runs over values of $1\le j\le n$ such that $\pi/2<|\arg z_{j}|<\pi-\pi/m$.
Since $z_{j}$ and $\overline{z_{j}}$ are zeros of $P(z)$, if the
modulus of
\[
q_{j,k}=\frac{\omega_{k}z-z_{j}}{\omega_{0}z-\overline{z_{j}}}
\]
is a term in this product, then so is the modulus of
\[
\frac{\omega_{k}z-\overline{z_{j}}}{\omega_{0}z-z_{j}}.
\]
To complete the proof of this lemma, we will show that if $\pi/2<\arg z_{j}<\pi-\pi/m$,
then for all $2\le k<m$
\[
\left|\frac{\omega_{k}z-z_{j}}{\omega_{0}z-\overline{z_{j}}}\frac{\omega_{k}z-\overline{z_{j}}}{\omega_{0}z-z_{j}}\right|<1
\]
or equivalently 
\[
\left|\omega_{k}z-z_{j}\right|\left|\omega_{k}z-\overline{z_{j}}\right|<\left|\omega_{0}z-z_{j}\right|\left|\omega_{0}z-\overline{z_{j}}\right|.
\]
From the property of complex conjugate that 
\[
\left|\omega_{k}z-z_{j}\right|\left|\omega_{k}z-\overline{z_{j}}\right|=\left|\overline{\omega_{k}}z-\overline{z_{j}}\right|\left|\overline{\omega_{k}}z-z_{j}\right|,\qquad0\le k<m,
\]
and $\overline{\omega_{0}}=\omega_{1}$, it suffices to prove that
\begin{equation}
\left|\omega_{k}z-z_{j}\right|\left|\omega_{k}z-\overline{z_{j}}\right|<\left|\omega_{1}z-z_{j}\right|\left|\omega_{1}z-\overline{z_{j}}\right|\label{eq:proddistineq}
\end{equation}
for any $2\le k<m$ such that $\omega_{k}=e^{(2k-1)\pi i/m}$ lies
on the open upper half plane $(\omega_{k}\ne-1$ since $m$ is even).
We note that the angles of those $\omega_{k}$ on the upper half plane
and $\omega_{1}$ belong to $[\pi/m,\pi-\pi/m]$. Thus to prove \ref{eq:proddistineq},
we will show that the function 
\[
f(\theta)=\left|ze^{i\theta}-z_{j}\right|^{2}\left|ze^{i\theta}-\overline{z_{j}}\right|^{2}
\]
attains is absolute maximum on $[\pi/m,\pi-\pi/m]$ only at $\theta=\pi/m$.
We note that 
\begin{align}
f(\theta) & =\left|z^{2}e^{2i\theta}-2ze^{i\theta}\Re z_{j}+|z_{j}|^{2}\right|^{2}\nonumber \\
 & =z^{4}-\left(4z^{3}+4z|z_{j}|^{2}\right)\Re z_{j}\cos\theta+4z^{2}(\Re z_{j})^{2}+|z_{j}|^{4}+2z^{2}|z_{j}|^{2}\cos2\theta\label{eq:ftheta}
\end{align}
from which we deduce 
\[
f'(\theta)=\left(4z^{3}+4z|z_{j}|^{2}\right)\Re z_{j}\sin\theta-4z^{2}|z_{j}|^{2}\sin\theta\cos\theta.
\]
Hence $f(\theta)$ has at most one critical point on the interval
$[\pi/m,\pi-\pi/m]$. We also note that for $\theta\in[\pi/m,\pi/2]$,
$f'(\theta)<0$ since $\Re z_{j}<0$, $z>0$, and $\cos\theta>0$.
Thus $f(\theta)$ will not attain its maximum value at such critical
point (if exists). We conclude that the maximum value of $f(\theta)$
occur at either $\theta=\pi/m$ or $\theta=\pi-\pi/m$. Since 
\[
\cos2(\pi-\pi/m)=\cos(2\pi/m),
\]
we deduce from (\ref{eq:ftheta}) and $\Re z_{j}<0$ that $f(\pi/m)>f(\pi-\pi/m)$.
Hence $f(\theta)$ attains it maximum value on $[\pi/m,\pi-\pi/m]$
only at $\pi/m$ and the lemma follows. 
\end{proof}
We recall that we want to show $H(z)$ has $\left\lfloor n/m\right\rfloor $
positive real zeros. Since $g(\theta)$ defined as in (\ref{eq:gtheta})
is the quotient of $mH(z)$ and $\prod_{j=1}^{n}(\omega_{0}z-\overline{z_{j}})e^{i\theta_{j}}$,
it suffices to show (i) $\theta=\theta(z)$ bijectively map the interval
$z\in(0,\infty)$ to $(0,\pi/m)$ and (ii) $g(\theta)$ has $\left\lfloor n/m\right\rfloor $
zeros on $\theta\in(0,\pi/m)$. The first claim is proved in the lemma
below.
\begin{lem}
\label{lem:monotone}For each $z\in\mathbb{R}^{+}$, and each $1\le j\le n$,
$0\le k<m$, let $q_{j,k}$ and $\theta_{j}$ be defined as in (\ref{eq:qjkdef})
and (\ref{eq:thetajdef}) . If for any $1\le j\le n$, $z_{j}=r_{j}e^{i\phi_{j}}$,
$\pi/2<|\phi_{j}|\le\pi$, lie on the open left half plane then 
\[
\theta:=\frac{1}{n}\sum_{j=1}^{n}\theta_{j}
\]
is an increasing function in $z\in(0,\infty)$ and this function maps
$(0,\infty)$ onto $(0,\pi/m)$. 
\end{lem}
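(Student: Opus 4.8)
The plan is to differentiate $\theta(z)$ and show $\theta'(z)>0$, then identify its two limiting values. Since $\theta_j=\Arg(\omega_1 z-z_j)$ with $\omega_1=e^{\pi i/m}$ and $z$ real, differentiating the argument of the curve $z\mapsto \omega_1 z-z_j$ gives $d\theta_j/dz=\Im\bigl(\omega_1/(\omega_1 z-z_j)\bigr)$. I would first record that $\Re(\omega_1 z-z_j)=z\cos(\pi/m)-\Re z_j>0$ for every $z>0$, because $\cos(\pi/m)\ge 0$ for $m\ge 2$ while $\Re z_j<0$; hence $\omega_1 z-z_j$ stays in the open right half-plane, the principal argument never meets its branch cut, $\theta(z)$ is smooth on $(0,\infty)$, and $\theta_j\in(-\pi/2,\pi/2)$, recovering (\ref{eq:thetajrange}).

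The genuine difficulty is that an individual $\theta_j$ need not be monotone: writing $z_j=r_j e^{i\phi_j}$ one finds $d\theta_j/dz=r_j\sin(\phi_j-\pi/m)/|\omega_1 z-z_j|^2$, whose sign varies when $\phi_j$ lies in the lower half-plane. The fix is that $P$ is real, so its non-real zeros occur in conjugate pairs, and I would bound the derivative pairwise. For a pair $z_j=re^{i\phi}$, $\overline{z_j}=re^{-i\phi}$ with $\phi\in(\pi/2,\pi)$, adding the two contributions over the common denominator $|\omega_1 z-z_j|^2|\omega_1 z-\overline{z_j}|^2$ and applying the product-to-sum identities collapses the numerator to
\[
2\sin\frac{\pi}{m}\Bigl(-\cos\phi\,(z^2+r^2)+2zr\cos\frac{\pi}{m}\Bigr),
\]
which is strictly positive because $\cos\phi<0$ and $\cos(\pi/m)\ge 0$. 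A real zero $z_j<0$ (so $\phi_j=\pi$) contributes $r_j\sin(\pi/m)/|\omega_1 z-z_j|^2>0$ directly. Summing these positive pairwise and single contributions gives $d\theta/dz>0$, so $\theta$ is strictly increasing; this trigonometric simplification is the crux, and it is exactly where the open-left-half-plane hypothesis ($\cos\phi<0$) and $m\ge 2$ ($\cos(\pi/m)\ge0$) enter.

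Finally I would evaluate the endpoints. As $z\to\infty$, $\omega_1 z-z_j\sim\omega_1 z$ gives $\theta_j\to\pi/m$, hence $\theta=\frac1n\sum_j\theta_j\to\pi/m$. As $z\to0^+$, continuity gives $\theta_j\to\Arg(-z_j)$; pairing conjugates and using $-\overline{z_j}=\overline{-z_j}$ yields $\Arg(-z_j)+\Arg(-\overline{z_j})=0$, while each real zero gives $\Arg(-z_j)=0$, so $\theta\to0$. Since $\theta$ is continuous and strictly increasing on $(0,\infty)$ with infimum $0$ and supremum $\pi/m$, neither attained, the intermediate value theorem shows $\theta$ maps $(0,\infty)$ bijectively onto $(0,\pi/m)$, which is the claim.
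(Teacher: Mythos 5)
Your proof is correct, and it follows the same skeleton as the paper's: establish $\theta'(z)>0$ by treating negative real zeros individually and complex zeros in conjugate pairs, then pin down the limits as $z\to0^{+}$ and $z\to\infty$. The difference lies in how the decisive positivity step is executed. The paper differentiates $\tan\theta_{j}$ rather than $\theta_{j}$, writes $d\theta_{j}/dz=\frac{d(\tan\theta_{j})}{dz}\cos^{2}\theta_{j}$, and for a conjugate pair $(z_{j},z_{k}=\overline{z_{j}})$ uses the asymmetric splitting (\ref{eq:sumtwoderiv}), which forces three separate sub-arguments: the sum $\frac{d(\tan\theta_{j})}{dz}+\frac{d(\tan\theta_{k})}{dz}$ is positive, $\frac{d(\tan\theta_{k})}{dz}>0$, and $\cos^{2}\theta_{k}>\cos^{2}\theta_{j}$ (the last via a comparison of $\tan\theta_{j}$ with $\tan\theta_{k}$). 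You instead compute the derivative of the argument in closed form, $d\theta_{j}/dz=\Im\bigl(\omega_{1}/(\omega_{1}z-z_{j})\bigr)=r_{j}\sin(\phi_{j}-\pi/m)/|\omega_{1}z-z_{j}|^{2}$, which agrees with the paper's (\ref{eq:derivetheta}) after multiplication by $\cos^{2}\theta_{j}$, and then add a conjugate pair over the common denominator $|\omega_{1}z-z_{j}|^{2}|\omega_{1}z-\overline{z_{j}}|^{2}$. I verified the product-to-sum collapse: the numerator is, up to the harmless positive factor $r$, exactly $2\sin(\pi/m)\bigl(-\cos\phi\,(z^{2}+r^{2})+2zr\cos(\pi/m)\bigr)$, manifestly positive since $\cos\phi<0$ and $\cos(\pi/m)\ge0$ for $m\ge2$. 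This single symmetric identity replaces the paper's three-part decomposition, making strict monotonicity immediate, and your branch-cut remark ($\Re(\omega_{1}z-z_{j})>0$) correctly justifies the smoothness of the principal argument. The endpoint limits are handled essentially as in the paper --- you invoke continuity of $\Arg$ directly and cancel $\Arg(-z_{j})+\Arg(-\overline{z_{j}})=0$ over conjugate pairs, where the paper passes through $\lim\tan\theta_{j}$ together with the range restriction (\ref{eq:thetajrange}); both are valid, and yours is slightly more economical.
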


\begin{proof}
We note that 
\begin{align}
n\frac{d\theta}{dz} & =\sum_{j=1}^{n}\frac{d\theta_{j}}{dz}\nonumber \\
 & =\sum_{j=1}^{n}\frac{d(\tan\theta_{j})}{dz}\left(\frac{d(\tan\theta_{j})}{d\theta_{j}}\right)^{-1}\nonumber \\
 & =\sum_{j=1}^{n}\frac{d(\tan\theta_{j})}{dz}\cos^{2}\theta_{j}.\label{eq:sumderiv}
\end{align}
where from (\ref{eq:thetajdef})
\begin{align}
\frac{d(\tan\theta_{j})}{dz} & =\frac{d}{dz}\left(\frac{\Im(\omega_{1}z-z_{j})}{\Re(\omega_{1}z-z_{j})}\right).\nonumber \\
 & =\frac{d}{dz}\left(\frac{z\sin(\pi/m)-r_{j}\sin\phi_{j}}{z\cos(\pi/m)-r_{j}\cos\phi_{j}}\right)\nonumber \\
 & =\frac{r_{j}\left(\cos(\pi/m)\sin\phi_{j}-\sin(\pi/m)\cos\phi_{j}\right)}{(z\cos(\pi/m)-r_{j}\cos\phi_{j})^{2}}\nonumber \\
 & =\frac{r_{j}\sin(\phi_{j}-\pi/m)}{(z\cos(\pi/m)-r_{j}\cos\phi_{j})^{2}}.\label{eq:derivetheta}
\end{align}
We recall that all $z_{j}$, $1\le j\le n$, lie on the open left
half plane and they are the zeros of the real polynomial $P(z)$.
If $z_{j}\in\mathbb{R^{-}}$, then $\phi_{j}=\pi$ and the equation
above implies $d(\tan\theta_{j})/dz\ge0$. On the other hand, if $z_{j}=r_{j}e^{i\phi_{j}}\notin\mathbb{R}^{-}$
is a zero of $P(z)$, then so is $\overline{z_{j}}$. Let $k\ne j$
be the index of such zero, i.e., $z_{k}=\overline{z_{j}}$ . Since
both $z_{j}$ and $z_{k}$ lie on the open left half plane, without
loss of generality, we assume $\phi_{k}\in(\pi/2,\pi)$ and $\phi_{j}\in(-\pi,-\pi/2)$.
The summation in (\ref{eq:sumderiv}) contains
\begin{align}
 & \frac{d(\tan\theta_{j})}{dz}\cos^{2}\theta_{j}+\frac{d(\tan\theta_{k})}{dz}\cos^{2}\theta_{k}\nonumber \\
= & \left(\frac{d(\tan\theta_{j})}{dz}+\frac{d(\tan\theta_{k})}{dz}\right)\cos^{2}\theta_{j}+\frac{d(\tan\theta_{k})}{dz}(\cos^{2}\theta_{k}-\cos^{2}\theta_{j}).\label{eq:sumtwoderiv}
\end{align}
To prove this sum is nonnegative we note from $z_{k}=\overline{z_{j}}$
that $r_{j}=r_{k}$ and $\phi_{j}+\phi_{k}=0$. Thus (\ref{eq:derivetheta})
yields
\[
\frac{d(\tan\theta_{j})}{dz}+\frac{d(\tan\theta_{k})}{dz}=\frac{2r_{k}\sin(-\pi/m)\cos(\phi_{k})}{(z\cos(\pi/m)-r_{k}\cos\phi_{k})^{2}}>0.
\]

Since $\phi_{k}\in(\pi/2,\pi)$, (\ref{eq:derivetheta}) implies that
$d(\tan\theta_{k})/dz>0$. Consequently to prove (\ref{eq:sumtwoderiv})
is positive, it suffices to show $\cos^{2}\theta_{k}-\cos^{2}\theta_{j}>0$.
Indeed, we have 
\[
\tan\theta_{j}=\frac{z\sin(\pi/m)-r_{j}\sin\phi_{j}}{z\cos(\pi/m)-r_{j}\cos\phi_{j}}>\frac{z\sin(\pi/m)-r_{j}\sin\phi_{k}}{z\cos(\pi/m)-r_{j}\cos\phi_{k}}=\tan\theta_{k}
\]
since $\sin\phi_{k}=-\sin\phi_{j}>0$ and $\cos\phi_{j}=\cos\phi_{k}<0$.
Hence 
\[
\cos^{2}\theta_{k}=\frac{1}{1+\tan^{2}\theta_{k}}>\frac{1}{1+\tan^{2}\theta_{j}}=\cos^{2}\theta_{j}.
\]

We have shown that (\ref{eq:sumderiv}) is positive and thus $\theta$
is an increasing function in $z\in(0,\infty)$. To finish the proof
of this lemma, it remains to show $\lim_{z\rightarrow0}\theta=0$
and $\lim_{z\rightarrow\infty}\theta=\pi/m$. From 
\[
\tan\theta_{j}=\frac{z\sin(\pi/m)-r_{j}\sin\phi_{j}}{z\cos(\pi/m)-r_{j}\cos\phi_{j}},
\]
we conclude
\begin{align*}
\lim_{z\rightarrow0}\tan\theta_{j} & =\tan\phi_{j},\\
\lim_{z\rightarrow\infty}\tan\theta_{j} & =\tan\frac{\pi}{m}.
\end{align*}
We combine these two limits with the fact that $-\pi/2<\theta_{j}<\pi/2$
(given in (\ref{eq:thetajrange})) and $\pi/2<|\phi_{j}|\le\pi$ to
obtain 
\begin{equation}
\lim_{z\rightarrow0}\theta_{j}=\begin{cases}
\phi_{j}-\pi & \text{ if }\pi/2<\phi_{j}\le\pi\\
\phi_{j}+\pi & \text{ if }-\pi<\phi_{j}<-\pi/2
\end{cases}\label{eq:limthetaj0}
\end{equation}
and
\[
\lim_{z\rightarrow\infty}\theta_{j}=\frac{\pi}{m}.
\]
The definition $\theta=\sum_{j=1}^{n}\theta_{j}$ yields $\lim_{z\rightarrow\infty}\theta=\pi/m$.
Finally, this definition, (\ref{eq:limthetaj0}), and the fact that
both $r_{j}e^{i\phi_{j}}$ and $r_{j}e^{-i\phi_{j}}$ are zeros of
$P(z)$ give $\lim_{z\rightarrow0}\theta=0$. 
\end{proof}
Having proved that $\theta=\theta(z)$ bijectively map the interval
$z\in(0,\infty)$ to $(0,\pi/m)$, we now turn our attention to proving
that $g(\theta)$, defined as in (\ref{eq:gtheta}) has $\left\lfloor n/m\right\rfloor $
zeros on the interval $(0,\pi/m)$. We recall that $g(\theta)$ is
the quotient of $mH(z)$ and $\prod_{j=1}^{n}(\omega_{0}z-\overline{z_{j}})e^{i\theta_{j}}$.
From (\ref{eq:thetajdef}) and the fact that $\omega_{1}z-z_{j}$
and $\omega_{0}z-\overline{z_{j}}$ are complex conjugate, we conclude
that $g(\theta)\in\mathbb{R}$ for $\theta\in(0,\pi/m)$. From (\ref{eq:prodq0})
and (\ref{eq:prodq1}), the sum of the first two terms of 
\[
g(\theta)=\sum_{k=0}^{m-1}\omega_{k}^{-r}e^{-in\theta}\prod_{j=1}^{n}q_{j,k}
\]
when $k=0$ and $k=1$ is 
\[
e^{-in\theta-ri\pi/m}+e^{in\theta+ri\pi/m}=2\cos\left(n\theta+\frac{r\pi}{m}\right).
\]
From Lemmas \ref{lem:modulusq-sector} and \ref{lem:modulusq-halfplane},
we obtain the following upper bound for the sum of the remainder terms
when $2\le k<m$: 
\[
\left|\sum_{k=2}^{m-1}\omega_{k}^{-r}e^{-in\theta}\prod_{j=1}^{n}q_{j,k}\right|\le\sum_{k=2}^{m-1}\prod_{j=1}^{n}|q_{j,k}|<m-2.
\]
When $m\le4$, the right side is less than $2$ and thus at the values
of $\theta$ where $\cos\left(n\theta+\frac{r\pi}{m}\right)=\pm1$,
$g(\theta)$ is positive/negative respectively. Specifically, when
\begin{equation}
\theta_{h}=\frac{h-r/m}{n}\pi,\qquad1\le h\le\left\lfloor n/m\right\rfloor ,\label{eq:thetah}
\end{equation}
the sign of $g(\theta_{h})$ is $(-1)^{h}$. Moreover $\theta_{h}>0$
and 
\[
\theta_{h}\le\frac{n/m-r/m}{n}\pi\le\frac{\pi}{m}.
\]
In the case $\theta_{h}=\pi/m$, we replace such value of $\theta_{h}$
by $(\pi/m)^{-}$ so that all $\theta_{h}$ defined in (\ref{eq:thetah})
lie on the interval $(0,\pi/m)$. By the intermediate value theorem,
$g(\theta)$ has at least one zero on each interval $(\theta_{h},\theta_{h+1})$,
$1\le h<\left\lfloor n/m\right\rfloor $. Thus we obtain at least
$\left\lfloor n/m\right\rfloor -1$ zeros of $g(\theta)$ on $(0,\pi/m)$,
each of which produces one positive real zero of $H(z)$ by Lemma
\ref{lem:monotone}. 

We claim that $H(z)$ has exactly $\left\lfloor n/m\right\rfloor $
positive real zeros. Indeed, if $z_{j}\notin\mathbb{R}$, we pair
the two terms $z-z_{j}$ and $z-\overline{z_{j}}$ on the right side
of
\[
P(z)=\prod_{j=1}^{n}(z-z_{j})
\]
and note that 
\[
(z-z_{j})(z-\overline{z_{j}})=z^{2}-2\Re z_{j}+|z_{j}|^{2}.
\]
Since all the zeros of $P(z)$ lie on the left half plane, $P(z)$
is a product of polynomials with positive coefficients and consequently
all the coefficients of $P(z)$ are positive. By Descartes' rule of
signs, the number of positive zeros of 
\[
H(z)=\sum_{j=0}^{\left\lfloor n/m\right\rfloor }(-1)^{j}a_{jm+r}z^{jm+r}
\]
is either $\left\lfloor n/m\right\rfloor $ or less than that by an
even number. As we have shown that $H(z)$ has at least $\left\lfloor n/m\right\rfloor -1$
positive real zeros, the number of such zeros must be $\left\lfloor n/m\right\rfloor $.
This concludes the proofs of Theorems \ref{thm:zerossectorpoly} and
\ref{thm:zerohalfplanepoly}. 

\section{Entire functions with arithmetic exponents}

In this section, we will prove Theorems \ref{thm:zerossectorentire}
and \ref{thm:zeroshalfplaneentire}. Suppose $m=3$ or $4$ and $0\le r<m$.
From an entire function $f(z)=\sum_{j=0}^{\infty}a_{j}z^{j}$, we
define 
\[
f_{r}(z)=\sum_{j=0}^{\infty}a_{jm+r}z^{jm+r}.
\]
 We assume that 
\[
f(z)=z^{p}e^{Az^{2}+Bz+C}\prod_{j=1}^{\infty}\left(1-\frac{z}{z_{j}}\right)e^{z/z_{j}}
\]
where $\sum_{j=1}^{\infty}1/z_{j}$ converges absolutely, $B+\sum_{j=1}^{\infty}\frac{1}{z_{j}}>0$,
and 
\begin{equation}
\begin{cases}
A\ge0\text{ and }\text{\ensuremath{\Re z_{j}<0}} & \text{ if }m=4\\
A=0\text{ and }2\pi/3<\left|\Arg z_{j}\right|\le\pi & \text{ if }m=3
\end{cases}.\label{eq:cond}
\end{equation}
Since $\sum_{j=1}^{\infty}1/z_{j}$ converges absolutely, we can collect
the factors $e^{z/z_{j}}$, $1\le j<\infty$ to have 
\[
f(z)=z^{p}e^{C}\exp\left(Az^{2}+\left(B+\sum_{j=1}^{\infty}\frac{1}{z_{j}}\right)z\right)\prod_{j=1}^{\infty}\left(1-\frac{z}{z_{j}}\right).
\]
Under the condition (\ref{eq:cond}), we will show the zeros of $f_{r}(z)$
lie on the $m$ radial rays: $\Im z^{m}=0$ and $\Re z^{m}\le0$.
With a similar argument as that in the previous section, it is equivalent
to show that the zeros of 
\begin{align*}
e^{-r\pi i/m}f_{r}(e^{\pi i/m}z) & =\sum_{j=0}^{\infty}(-1)^{j}a_{jm+r}z^{jm+r}\\
 & =\frac{1}{m}\sum_{k=0}^{m-1}\omega_{k}^{-r}f(\omega_{k}z)
\end{align*}
lie on the $m$ radial rays: $\Im z^{m}=0$ and $\Re z^{m}\ge0$.
If we denote the polynomial above by $h(z)$, then it suffices to
prove that the zeros of $h(z)$ lie on these rays on any compact set
$K$. On that compact set, the sequence of polynomials
\[
P_{n}(z)=z^{p}e^{C}\left(1+\frac{1}{n}\left(Az^{2}+\left(B+\sum_{j=1}^{\infty}\frac{1}{z_{j}}\right)z\right)\right)^{n}\prod_{j=1}^{n}\left(1-\frac{z}{z_{j}}\right)
\]
converges uniformly to $f(z)$ as $n\rightarrow\infty$. Consequently
the sequence of polynomials 
\begin{equation}
\frac{1}{m}\sum_{k=0}^{m-1}\omega_{k}^{-r}P_{n}(\omega_{k}z)\label{eq:Pnmodr}
\end{equation}
converges uniformly on $K$ to 
\begin{equation}
\frac{1}{m}\sum_{k=0}^{m-1}\omega_{k}^{-r}f(\omega_{k}z)\label{eq:fmodr}
\end{equation}
as $n\rightarrow\infty$. 

The zeros of $P_{n}(z)$ are $z_{j}$, $1\le j<\infty$, and those
of 
\begin{equation}
Az^{2}+\left(B+\sum_{j=1}^{\infty}\frac{1}{z_{j}}\right)z+n.\label{eq:quadratic}
\end{equation}
If $m=3$, then $A=0$ and all the zeros of $P_{n}(z)$ lie on the
sector $2\pi/3<|\Arg z|\le\pi$ since 
\[
B+\sum_{j=1}^{\infty}\frac{1}{z_{j}}>0.
\]
From the previous section, the zeros of the polynomial (\ref{eq:Pnmodr})
lie on the $m$ radial rays: $\Im z^{m}=0$ and $\Re z^{m}\ge0$.
Since this sequence converges to (\ref{eq:fmodr}) uniformly on $K$
as $n\rightarrow\infty$, the intersection of the set of zeros of
(\ref{eq:fmodr}) and $K$ must also be a subset of the same $m$
radial rays. Since $K$ is arbitrary, all the zeros of (\ref{eq:fmodr})
must lie on these rays and Theorem \ref{thm:zerossectorentire} follows.

On the other hand if $m=4$, then the zeros of (\ref{eq:quadratic})
lie on the left-half plane since the zeros are either 
\[
-n\left(B+\sum_{j=1}^{\infty}\frac{1}{z_{j}}\right)^{-1}<0
\]
in the case $A=0$ or a pair of complex conjugates whose real part
is 
\[
-\left(B+\sum_{j=1}^{\infty}\frac{1}{z_{j}}\right)\frac{1}{2A}<0
\]
in the case $A\ne0$ by the Vieta's formula. Theorem \ref{thm:zeroshalfplaneentire}
follows from the same arguments as those in the case $m=3$. We complete
the proof of Theorems \ref{thm:zerossectorentire} and \ref{thm:zeroshalfplaneentire}.

\end{document}